\documentclass[12pt,twoside, final]{amsart}

\usepackage{amsmath,amsthm,amscd,amsfonts,amssymb,enumerate}
\usepackage[all]{xy}
\usepackage{graphicx}
\usepackage{color}
\usepackage[colorlinks]{hyperref}
\usepackage{amsfonts,amssymb,amscd,amsmath,enumerate,url,verbatim}
 \usepackage[dvips]{epsfig}
 \usepackage[none]{hyphenat}
\usepackage{amsmath,amssymb,amsfonts,enumerate,amsthm}
 \usepackage{amsgen, amstext,amsbsy,amsopn, amsthm, amsfonts,amssymb,amscd,amsmath,euscript,enumerate,url,verbatim,calc,xypic}
 \usepackage{latexsym}
 \usepackage{graphics}
 \usepackage{color}
\numberwithin{equation}{section}
\newtheorem{theorem}{Theorem}[section]
\newtheorem{lemma}[theorem]{Lemma}
\newtheorem{corollary}[theorem]{Corollary}
\theoremstyle{definition}
\newtheorem{definition}[theorem]{Definition}

\newcommand{\Ass}{\operatorname{Ass}}
\newcommand{\Ext}{\operatorname{Ext}}

\newcommand{\Hom}{\operatorname{Hom}}

\newcommand{\NN}{\mathbb N_0}

\title{Koszul Cohomology of \v{C}ech Cohomology Modules}
\author{Maryam Jahangiri}
\address{ Department of Mathematics, Faculty of Mathematical Sciences and Computer, Kharazmi University, Tehran,
Iran.}
\email{ jahangiri@khu.ac.ir\\
jahangiri.maryam@gmail.com}
\subjclass[2020]{ 13D45, 14B15, 16P20, 18G40,    13E05.  }
\keywords{Koszul complex,   $\check{\text{C}}$ech   complex, Local cohomology modules,  Spectral sequences, Serre classes. }
\begin{document}
\maketitle

\begin{abstract}
Let $R$ be a commutative Noetherian ring, let
$\mathbf{x}=x_1,\ldots,x_n$ be an $R$-regular sequence, and let
$\mathbf{y}=y_1,\ldots,y_m$ be a sequence of elements of $R$. Put
$I=(\mathbf y)$. Let $\mathcal S$ be a Serre subcategory of the
category of $R$-modules. We consider the double complex obtained from
the Koszul co-complex with respect to $\mathbf x$ and the \v{C}ech
complex with respect to $\mathbf y$. Using the two spectral sequences
associated with this double complex, we prove that
\[
\Ext_R^i(R/(\mathbf x),H_I^j(R))\in\mathcal S
\quad\text{for all }i,j\in\NN
\]
implies
\[
H_I^j(R/(\mathbf x))\in\mathcal S
\quad\text{for all }j\in\NN.
\]
\end{abstract}

\section{Introduction}

Throughout this paper, $R$ is a commutative Noetherian ring with
$1\ne0$, and $I$ is an ideal of $R$. We denote by $\NN$ the set of
non-negative integers.

For an $R$-module $M$, the $I$-torsion submodule of $M$ is
\[
\Gamma_I(M)
=
\bigcup_{t\geq 0}(0:_M I^t)
=
\{x\in M\mid I^t x=0\text{ for some }t\geq0\}.
\]
The $i$-th local cohomology module of $M$ with respect to $I$ is
defined by
\[
H_I^i(M)
=
H^i\bigl(\Gamma_I(E_R^\bullet(M))\bigr),
\]
where $E_R^\bullet(M)$ is an injective resolution of $M$.

Local cohomology modules may also be computed by means of Koszul
complexes, \v{C}ech complexes, and direct limits of suitable
$\Ext$-modules; see \cite[Chapters 1 and 5]{BS}.

Huneke posed several fundamental questions concerning local
cohomology modules, including the following:
\begin{enumerate}
\item When does $H_I^i(M)$ vanish?
\item When is $H_I^i(M)$ Noetherian or Artinian?
\item When is $\Ass_R(H_I^i(M))$ finite?
\end{enumerate}
See \cite{Huneke} for these and related questions.

It is known that local cohomology modules are, in general, neither
Noetherian nor Artinian, and that their sets of associated primes need
not be finite; see, for example, \cite{Singh}. These questions have
generated extensive research on vanishing and finiteness properties
of local cohomology modules; see, for example,
\cite{DoseaMiranda, EghbaliBoix, g, Lyubeznik93, Lyubeznik06}.

Let $\mathbf{x}=x_1,\ldots,x_n$ be an $R$-regular sequence and let
$\mathbf{y}=y_1,\ldots,y_m$ be a sequence of elements of $R$, with
$I=(\mathbf y)$. In this paper we consider the double complex obtained
from the Koszul co-complex with respect to $\mathbf{x}$ and the
\v{C}ech complex with respect to $\mathbf y$. Using the two spectral
sequences associated with this double complex, we prove the following
result.

\begin{theorem}\label{thm:main}
Let $\mathbf{x}=x_1,\ldots,x_n$ be an $R$-regular sequence and let
$I$ be an ideal of $R$. Let $\mathcal S$ be a Serre subcategory of
$R$-Mod. If
\[
\Ext_R^i\bigl(R/(\mathbf x),H_I^j(R)\bigr)\in\mathcal S\,\,\text{for all}\,\,i,j\in\NN
\]
  then
\[
H_I^j\bigl(R/(\mathbf x)\bigr)\in\mathcal S,\,\,\text{for all}\,j\in\NN.
\]
\end{theorem}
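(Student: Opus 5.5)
Since $R$ is Noetherian, write $I=(\mathbf y)$ with $\mathbf y=y_1,\ldots,y_m$. Let $K^\bullet=K^\bullet(\mathbf x;R)$ be the Koszul co-complex, $K^p=\bigwedge^p R^n$ for $0\le p\le n$, and let $C^\bullet=C^\bullet(\mathbf y;R)$ be the \v{C}ech complex. We work with the first-quadrant bounded double complex $D^{p,q}=K^p\otimes_R C^q$ and compare its two spectral sequences, both converging to $H^{p+q}(\operatorname{Tot} D)$.

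Filtering first by $p$, take cohomology in the \v{C}ech direction. Each $K^p$ is free of finite rank, so ${}^{I}E_1^{p,q}=K^p\otimes_R H_I^q(R)$. Taking cohomology in the Koszul direction then gives ${}^{I}E_2^{p,q}=H^p(\mathbf x;H_I^q(R))$, the Koszul cohomology of $H_I^q(R)$. Because $\mathbf x$ is $R$-regular, $K^\bullet$ is a free resolution of $R/(\mathbf x)$, up to the usual self-duality $K^\bullet\cong\Hom_R(K_\bullet,R)$ with index reversal. Hence $H^p(\mathbf x;M)\cong\Ext_R^p(R/(\mathbf x),M)$ for every $R$-module $M$. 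So ${}^{I}E_2^{p,q}\cong \Ext_R^p(R/(\mathbf x),H_I^q(R))$, and by hypothesis this lies in $\mathcal S$. Each later page is a subquotient of the previous one, and $\mathcal S$ is closed under submodules and quotients, so ${}^{I}E_\infty^{p,q}\in\mathcal S$. The total cohomology $H^k(\operatorname{Tot}D)$ has a finite filtration whose factors are the ${}^{I}E_\infty^{p,k-p}$; closure under extensions then gives $H^k(\operatorname{Tot}D)\in\mathcal S$ for all $k$.

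Filtering the other way, take cohomology in the Koszul direction first. Each $C^q$ is a direct sum of localizations $R_{y_{i_1}\cdots y_{i_q}}$, which are flat, so ${}^{II}E_1^{p,q}=H^p(\mathbf x;R)\otimes_R C^q$. Since $\mathbf x$ is $R$-regular, $H^p(\mathbf x;R)=0$ for $p\ne n$ and $H^n(\mathbf x;R)\cong R/(\mathbf x)$. Therefore ${}^{II}E_2^{p,q}$ vanishes unless $p=n$, and ${}^{II}E_2^{n,q}=H^q(C^\bullet\otimes_R R/(\mathbf x))=H_I^q(R/(\mathbf x))$, using that the \v{C}ech complex computes local cohomology of any module. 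The sequence is concentrated in one column, so it degenerates, giving $H^{n+q}(\operatorname{Tot}D)\cong H_I^q(R/(\mathbf x))$. By the previous paragraph this module lies in $\mathcal S$ for every $q$.

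The main obstacle is bookkeeping rather than conceptual. First, the identification of Koszul cohomology with $\Ext_R(R/(\mathbf x),-)$ must be done with the correct indexing, since $K^\bullet$ is a cohomological complex dual to the Koszul resolution. Second, the total complex must be bounded so that both spectral sequences converge with finite filtrations; this holds because $0\le p\le n$ and $0\le q\le m$. I would verify the identification first: check that $H^p(\mathbf x;M)=H^p(\Hom_R(K_\bullet(\mathbf x),M))$, which is $\Ext^p$ because $K_\bullet(\mathbf x)$ resolves $R/(\mathbf x)$.
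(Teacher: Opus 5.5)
Your proposal is correct and follows essentially the same route as the paper: since each $K^p$ is finite free, your $D^{p,q}=K^p\otimes_R C^q$ is the paper's double complex $\check C^q(\mathbf y,K^p(\mathbf x,R))$. You also use the same two spectral sequences. In the first, the $E_2$-terms $\Ext_R^p(R/(\mathbf x),H_I^q(R))$ lie in $\mathcal S$, which forces $H^k(\operatorname{Tot})\in\mathcal S$; the second collapses and gives $H^{n+q}(\operatorname{Tot})\cong H_I^q(R/(\mathbf x))$. Your explicit flatness arguments for the $E_1$-pages (freeness of $K^p$, flatness of the localizations in $C^q$) supply details that the paper leaves implicit.
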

As consequences, we obtain corresponding results for minimax,
 weakly Laskerian, finitely generated, Artinian modules and 
 modules of dimension $\leq t$, for some $t\in \NN$.
\section{THE RESULTS}

We recall the basic constructions used in the paper.

\begin{definition}
Let $N$ be an $R$-module, let
$\mathbf x=x_1,\ldots,x_n$ and
$\mathbf y=y_1,\ldots,y_m$ be sequences of elements of $R$.
\begin{enumerate}
    \item 

The Koszul co-complex of $N$ with respect to $\mathbf x$ is
\[
K^\bullet(\mathbf x,N)
=
\Hom_R(K_\bullet(\mathbf x,R),N).
\]
Here $K_\bullet(\mathbf x,R)$ denotes the Koszul complex on
$\mathbf x$. Explicitly,
\[
0\longrightarrow R
\longrightarrow R^n
\longrightarrow\cdots\longrightarrow
\bigoplus_{1\leq i_1<\cdots<i_{k+ 1}\leq n} R
\xrightarrow{d_{k+ 1}}
\bigoplus_{1\leq i_1<\cdots<i_k\leq n} R
\cdots\rightarrow R^n\longrightarrow R
\longrightarrow0,
\]
where
\[
d_{k+1}(e_{i_1}\wedge\cdots\wedge e_{i_{k+1}})
=
\sum_{s=1}^{k+1}(-1)^{s-1}x_{i_s}
e_{i_1}\wedge\cdots\wedge
\widehat{e_{i_s}}\wedge\cdots\wedge e_{i_{k+1}}.
\]
If $\mathbf x$ is an $R$-regular sequence, then
$K_\bullet(\mathbf x,R)$ is a finite free resolution of
$R/(\mathbf x)$; see \cite[Chapter 1]{BH}.
Consequently,
\begin{equation}\label{koszul}
    H^i(K^\bullet(\mathbf x,N))
\cong
\Ext_R^i(R/(\mathbf x),N).
\end{equation}
 
\item
The \v{C}ech complex of $N$ with respect to
$\mathbf y=y_1,\ldots,y_m$ is denoted by
$\check C^\bullet(\mathbf y,N)$ and  is defined to be 
   \[0\rightarrow N \rightarrow \oplus_{i= 1}^m N_{y_i}\rightarrow\cdots \rightarrow 
   \bigoplus_{1\leq i_1<\cdots <i_t\leq m}N_{y_{i_1}\cdots y_{i_t}}\stackrel{d^t}{\longrightarrow}\bigoplus_{1\leq i_1<\cdots <i_{t+1}\leq m}N_{y_{i_1}\cdots y_{i_{t+1}}} \cdots  \rightarrow  N_{y_1 \cdots  y_m}\longrightarrow0,\]
   where for any $1\leq t\leq m$, $1\leq i_1<\cdots <i_t\leq m$ and  $1\leq j_1<\cdots <j_{t+ 1}\leq m$ 
   \[d^t:N_{y_{i_1}\cdots y_{i_t}}\longrightarrow N_{y_{j_1}\cdots y_{j_{t+ 1}}} \] is defined as
  \[
   d^i(\tfrac{z}{y_{i_1}^n\cdots y_{i_t}^n})= \begin{cases}
    (-1)^{s- 1} \tfrac{z}{y_{i_1}^n\cdots y_{i_{t }}^n}, & \text{if } (i_1,\cdots, i_t)= (j_1,\cdots, \hat{j_s},\cdots, j_{t+ 1}),\\
     0, & \text{otherwise, } 
 \end{cases}\]
 for all  $\tfrac{z}{y_{i_1}^n\cdots y_{i_t}^n}\in N_{y_{i_1}\cdots y_{i_t}}$.
The cohomologies of $\check C^\bullet(\mathbf y,N)$ computes local
cohomology:
\begin{equation}\label{ceck}
  H^i(\check C^\bullet(\mathbf y,N))
\cong H_I^i(N),
\qquad I=(\mathbf y).  
\end{equation}
 
See \cite[Theorem 5. 1. 20]{BS}.
 
\item
Let $X^{\bullet,\bullet}$ be the double complex
\[
X^{i,j}:=
\check C^j(\mathbf y,K^i(\mathbf x,R)).
\]
Equivalently, its $i$-th row is the \v{C}ech complex of the $i$-th
module in $K^\bullet(\mathbf x,R)$, while its $j$-th column is the
Koszul co-complex of the $j$-th module in
$\check C^\bullet(\mathbf y,R)$.

\[
\begin{array}{ccccccccc}

 & & 0 & & 0 & &   & & 0 \\[0.5em]
 & & \uparrow & & \uparrow & & & & \uparrow \\

0 & \longrightarrow &
R
& \longrightarrow &
\displaystyle\oplus_{i=1}^{m}R_{y_i}
& \longrightarrow &
\cdots
& \longrightarrow &
R_{y_1\cdots y_m}
\longrightarrow 0
\\[1.2em]

 & & \uparrow & & \uparrow & & & & \uparrow \\

0 & \longrightarrow &
\displaystyle\oplus_{j=1}^{n}R
& \longrightarrow &
\displaystyle\oplus_{j=1}^{n}
\left(\oplus_{i=1}^{m}R_{y_i}\right)
& \longrightarrow &
\cdots
& \longrightarrow &
\displaystyle\oplus_{j=1}^{n}R_{y_1\cdots y_m}
\longrightarrow 0
\\

 & & \vdots & & \vdots & & & & \vdots \\

 & & \uparrow & & \uparrow & & & & \uparrow \\

0 & \longrightarrow &
\displaystyle\bigoplus_{1\le j_1<\cdots<j_s\le n}R
& \longrightarrow &
\displaystyle\bigoplus_{1\le j_1<\cdots<j_s\le n}
\left(\bigoplus_{i=1}^{m}R_{y_i}\right)
& \longrightarrow &
\cdots
& \longrightarrow &
\displaystyle\bigoplus_{1\le j_1<\cdots<j_s\le n}
R_{y_1\cdots y_m}
\longrightarrow 0
\\

 & & \uparrow & & \uparrow & & & & \uparrow \\

 & & \vdots & & \vdots & & & & \vdots \\

 & & \uparrow & & \uparrow & & & & \uparrow \\

0 & \longrightarrow &
R
& \longrightarrow &
\displaystyle\oplus_{i=1}^{m}R_{y_i}
& \longrightarrow &
\cdots
& \longrightarrow &
R_{y_1\cdots y_m}
\longrightarrow 0
\\

 & & \uparrow & & \uparrow & & & & \uparrow \\

 & & 0 & & 0 & &   & & 0
\end{array}
\]

\item
Let $\operatorname{Tot}(X^{\bullet,\bullet})$ denote its total
complex. Since the double complex is bounded in both directions, the
two standard spectral sequences converge to the cohomology of the
total complex.

For further information on this topic, we refer the reader to Chapter~10 of \cite{r}.
\end{enumerate}
\end{definition}
\begin{lemma}\label{lem:ss}
With the above notation, there are two convergent spectral sequences
\[
{}^IE_2^{i,j}
=
H^i\bigl(K^\bullet(\mathbf x,H_I^j(R))\bigr)
\Longrightarrow
H^{i+j}(\operatorname{Tot}X),
\]
and
\[
{}^{II}E_2^{i,j}
=
H_I^i\bigl(H^j(K^\bullet(\mathbf x,R))\bigr)
\Longrightarrow
H^{i+j}(\operatorname{Tot}X).
\]
\end{lemma}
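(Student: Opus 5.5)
We use the two standard spectral sequences of the first-quadrant bounded double complex $X^{\bullet,\bullet}$, with $X^{i,j}=\check C^j(\mathbf y,K^i(\mathbf x,R))$. Here $i$ is the Koszul (vertical) index and $j$ the \v{C}ech (horizontal) index. Since $X^{i,j}=0$ unless $0\le i\le n$ and $0\le j\le m$, both spectral sequences converge to $H^{\ast}(\operatorname{Tot}X)$ by the general theory cited from \cite[Chapter 10]{r}. The work is to identify the $E_1$ and $E_2$ pages in each case. The key inputs are that $K^i(\mathbf x,R)$ is finite free and that \v{C}ech complexes and Koszul co-complexes commute in the appropriate sense.

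For the first spectral sequence, we take horizontal cohomology first. The $i$-th row is $\check C^\bullet(\mathbf y,K^i(\mathbf x,R))$, so by (\ref{ceck}) its cohomology in degree $j$ is $H_I^j(K^i(\mathbf x,R))$. Since $K^i(\mathbf x,R)\cong R^{\binom{n}{i}}$ and local cohomology commutes with finite direct sums, this is $H_I^j(R)^{\binom ni}$. We identify this canonically with $K^i(\mathbf x,H_I^j(R))=\Hom_R(K_i(\mathbf x,R),H_I^j(R))$. We must check that the vertical maps induced on ${}^IE_1$ are the Koszul co-differentials of $K^\bullet(\mathbf x,H_I^j(R))$. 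These maps are given by matrices whose entries are $\pm x_k$ or $0$. The \v{C}ech complex is functorial and additive, so $\check C^\bullet(\mathbf y,-)$ applied to multiplication by $x_k$ is multiplication by $x_k$, and the same holds on cohomology. Hence ${}^IE_1^{i,j}=K^i(\mathbf x,H_I^j(R))$ with the Koszul differential, and taking vertical cohomology gives ${}^IE_2^{i,j}=H^i(K^\bullet(\mathbf x,H_I^j(R)))$. Abutment to $H^{i+j}(\operatorname{Tot}X)$ is standard.

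For the second spectral sequence, we take vertical cohomology first. The $j$-th column is $K^\bullet(\mathbf x,\check C^j(\mathbf y,R))$. Each \v{C}ech term is a finite direct sum of localizations $R_{y_{i_1}\cdots y_{i_t}}$, and localization is exact and commutes with $\Hom_R$ from finite free modules. Therefore $K^\bullet(\mathbf x,R_{y})\cong K^\bullet(\mathbf x,R)\otimes_R R_y$ and $H^i(K^\bullet(\mathbf x,R_y))\cong H^i(K^\bullet(\mathbf x,R))_y$. Summing over the index sets gives ${}^{II}E_1^{j,i}\cong\check C^j(\mathbf y,H^i(K^\bullet(\mathbf x,R)))$. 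The horizontal maps are the \v{C}ech maps, since they are natural localization maps with signs and therefore commute with these identifications. Taking horizontal cohomology and applying (\ref{ceck}) to the module $H^i(K^\bullet(\mathbf x,R))$ gives $H_I^j(H^i(K^\bullet(\mathbf x,R)))$. After swapping the roles of the indices to match the statement, this is ${}^{II}E_2^{i,j}=H_I^i(H^j(K^\bullet(\mathbf x,R)))$, converging to $H^{i+j}(\operatorname{Tot}X)$.

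The main obstacle is bookkeeping rather than substance. We must verify that the induced differentials on the $E_1$ pages really are the Koszul, respectively \v{C}ech, differentials under the chosen isomorphisms. This comes down to two facts: multiplication maps commute with localization, and an exact functor (localization) commutes with taking cohomology. We must also keep the index convention of the second spectral sequence consistent with the statement. For later use, regularity of $\mathbf x$ gives $H^j(K^\bullet(\mathbf x,R))=0$ for $j\ne n$ and $\cong R/(\mathbf x)$ for $j=n$, by (\ref{koszul}). We also have ${}^IE_2^{i,j}\cong\Ext^i_R(R/(\mathbf x),H_I^j(R))$. Neither fact is needed for the lemma itself.
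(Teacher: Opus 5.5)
Your proposal is correct and follows the same route as the paper: you use the two standard spectral sequences of the bounded double complex $X^{\bullet,\bullet}$, taking \v{C}ech cohomology first for ${}^{I}E$ and Koszul cohomology first for ${}^{II}E$. Your identification of the $E_1$ pages (finite freeness of $K^i(\mathbf x,R)$ for the first, and flatness of the \v{C}ech terms, so that Koszul cohomology commutes with localization, for the second) makes explicit what the paper's brief proof leaves implicit.
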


\begin{proof}
Apply the standard spectral sequences associated with a bounded
double complex. Taking cohomology first in the \v{C}ech direction
gives the first spectral sequence, while taking cohomology first in
the Koszul direction gives the second one. Since
\[
H^j(\check C^\bullet(\mathbf y,R))\cong H_I^j(R)
\]
 
the displayed descriptions of the two $E_2$-pages follow.
\end{proof}

\begin{definition}
Let $\mathcal C(R)$ denote the category of all $R$-modules. A
subcategory $\mathcal S$ of $\mathcal C(R)$ is called a Serre
subcategory if, for every exact sequence
\[
0\longrightarrow M\longrightarrow N\longrightarrow L
\longrightarrow0,
\]
one has
\[
N\in\mathcal S
\quad\Longleftrightarrow\quad
M,L\in\mathcal S.
\]
\end{definition}

Examples include the classes of finitely generated modules, Artinian
modules, finite-length modules, minimax modules, and weakly
Laskerian modules. See \cite{DMA, Rudlof}.

Recall that an $R$-module $M$ is minimax if it has a finitely generated
submodule $N$ such that $M/N$ is Artinian. An $R$-module $M$ is
weakly Laskerian if 
$\Ass_R(M/N)$ 
is finite for every submodule $N$ of $M$.

\begin{theorem}\label{thm:main2}
Let $\mathbf x=x_1,\ldots,x_n$ be an $R$-regular sequence, let $I$ be
an ideal of $R$, and let $\mathcal S$ be a Serre subcategory of
$\mathcal C(R)$. If
\[
\Ext_R^i(R/(\mathbf x),H_I^j(R))\in\mathcal S
\quad\text{for all }i,j\in\NN,
\]
  then
\[
H_I^j(R/(\mathbf x))\in\mathcal S  \quad\text{for all } j\in\NN.
\]
\end{theorem}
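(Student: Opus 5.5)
We will use the two spectral sequences of Lemma~\ref{lem:ss}. Since $I$ is generated by finitely many elements in the Noetherian ring $R$, write $I=(\mathbf y)$ with $\mathbf y=y_1,\ldots,y_m$, and form the double complex $X^{\bullet,\bullet}$ as above. The argument compares the two $E_2$-pages through the common abutment $H^{\bullet}(\operatorname{Tot}X)$.

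\emph{The second spectral sequence collapses.} Because $\mathbf x$ is $R$-regular, \eqref{koszul} gives
\[
H^j(K^\bullet(\mathbf x,R))\cong\Ext_R^j(R/(\mathbf x),R),
\]
and this is $0$ for $j\neq n$ and $\cong R/(\mathbf x)$ for $j=n$. Hence ${}^{II}E_2^{i,j}=0$ for $j\ne n$, and ${}^{II}E_2^{i,n}\cong H_I^i(R/(\mathbf x))$. A spectral sequence concentrated in a single row degenerates at $E_2$, so
\[
H^{i+n}(\operatorname{Tot}X)\cong H_I^i(R/(\mathbf x))\quad\text{for all } i.
\]
This row is easily confused with row $0$; the key point is that the only nonzero row is the one in Koszul degree $n$, where $\Ext^n_R(R/(\mathbf x),R)\cong R/(\mathbf x)$.

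\emph{The first spectral sequence puts the abutment in $\mathcal S$.} By \eqref{koszul},
\[
{}^IE_2^{i,j}\cong\Ext_R^i(R/(\mathbf x),H_I^j(R)),
\]
which lies in $\mathcal S$ by hypothesis. The page is bounded, with $0\le i\le n$ and $0\le j\le m$.
\begin{itemize}
\item Each ${}^IE_{r+1}^{i,j}$ is a subquotient of ${}^IE_r^{i,j}$, being the kernel of one map modulo the image of another. Since $\mathcal S$ is closed under submodules and quotients, induction on $r$ gives ${}^IE_\infty^{i,j}\in\mathcal S$ for all $i,j$.
\item Convergence provides a finite filtration
\[
0=F^{k+1}\subseteq F^{k}\subseteq\cdots\subseteq F^0=H^k(\operatorname{Tot}X)
\]
with successive quotients ${}^IE_\infty^{p,k-p}$.
\item Closure of $\mathcal S$ under extensions, applied inductively along this filtration, gives $H^k(\operatorname{Tot}X)\in\mathcal S$ for every $k$.
\end{itemize}

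\emph{Conclusion.} Combining the two steps, $H_I^j(R/(\mathbf x))\cong H^{j+n}(\operatorname{Tot}X)\in\mathcal S$ for every $j\in\NN$.

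The only genuinely delicate point is the degeneration of the second spectral sequence, namely the regular-sequence computation of $\Ext_R^j(R/(\mathbf x),R)$ and the resulting index shift by $n$. The rest is formal Serre-class bookkeeping, using only the closure of $\mathcal S$ under subquotients and extensions.
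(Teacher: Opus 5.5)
Your proposal is correct and follows essentially the same route as the paper. The first spectral sequence, with ${}^IE_2^{i,j}\cong\Ext_R^i(R/(\mathbf x),H_I^j(R))$, puts every $H^t(\operatorname{Tot}X)$ in $\mathcal S$ via subquotients and the finite filtration, and the second spectral sequence collapses onto the Koszul-degree-$n$ row to give $H_I^t(R/(\mathbf x))\cong H^{t+n}(\operatorname{Tot}X)$; your explicit choice of generators $I=(\mathbf y)$ just spells out a step the paper leaves implicit.
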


\begin{proof}
By Lemma~\ref{lem:ss}, the first spectral sequence has
\[
{}^IE_2^{i,j}
=
H^i(K^\bullet(\mathbf x,H_I^j(R))).
\]
Since $K_\bullet(\mathbf x,R)$ is a finite free resolution of
$R/(\mathbf x)$, we have
\[
{}^IE_2^{i,j}
\cong
\Ext_R^i(R/(\mathbf x),H_I^j(R)).
\]
By hypothesis, all terms on the $E_2$-page of the first spectral
sequence belong to $\mathcal S$. Every subsequent page is obtained
from the preceding page by taking subquotients; hence
\[
{}^IE_\infty^{i,j}\in\mathcal S \quad\text{for all } i, j\in\NN.
\] 

The convergence of the spectral sequence yields a finite filtration
of $H^t(\operatorname{Tot}X)$ whose successive quotients are the
modules ${}^IE_\infty^{i,t-i}$. To be more precise, there exists a filtration
\[
0=\varphi^{\,t+1}
\subseteq
\varphi^{\,t}
\subseteq
\cdots
\subseteq
\varphi^{\,1}
\subseteq
\varphi^{\,0}
=
H^{t}(\operatorname{Tot}X),
\]
for every \(t\in\mathbb{N}_{0}\), such that
\[
{}^{\mathrm{I}}E_{\infty}^{\,i,j}
\cong
\frac{\varphi^{\,i}}{\varphi^{\,i+1}},
\qquad
\text{for all } i,j\in\mathbb{N}_{0}
\text{ with } i+j=t.
\]

Since $\mathcal S$ is a Serre
subcategory, it follows that
\[
H^t(\operatorname{Tot}X)\in\mathcal S,\,\,\text{for every}\,\ t\in \NN.
\]
On the other hand, the regularity of $\mathbf x$ gives
\[
H^j(K^\bullet(\mathbf x,R))
= \operatorname{Ext}_R^j(R/(\mathbf x), R)= 
\begin{cases}
R/(\mathbf x),&j= n,\\
0,&j\neq n.
\end{cases}
\]
Thus the second spectral sequence collapses to its $j= n$ row, and
\begin{align*}
        H_I^t(R/(\mathbf x))&\cong H_I^t(\operatorname{Ext}_R^n(R/(\mathbf x), R))\\
       &=  {}^{\mathrm{II}}E_{2}^{\,t,n}\\
       &\cong {}^{\mathrm{II}}E_{\infty}^{\,t,n} \\
     & \cong H^{t+ n}(\operatorname{Tot}X).
\end{align*}
 
Therefore
\[
H_I^t(R/(\mathbf x))\in\mathcal S,\,\,\text{for every}\,\,  t\in\NN,
\]
 as required.
\end{proof}

\begin{corollary}
Let $\mathbf x=x_1,\ldots,x_n$ be an $R$-regular sequence and let
$I$ be an ideal of $R$.

\begin{enumerate}
\item If 
$\Ext_R^i(R/(\mathbf x),H_I^j(R))$ 
is minimax (respectively, finitely generated or Artinian) for all
$i,j\in\NN$, then 
$H_I^j(R/(\mathbf x))$  
has the same property for all $j\in\NN$.

\item If 
$\Ext_R^i(R/(\mathbf x),H_I^j(R))$ 
is weakly Laskerian for all $i,j\in\NN$, then 
$H_I^j(R/(\mathbf x))$ 
is weakly Laskerian for all $j\in\NN$.

\item If, for some $t\in\NN$,  
$\dim_R\Ext_R^i(R/(\mathbf x),H_I^j(R))\le t$ 
for all $i,j\in\NN$, then 
$\dim_R H_I^j(R/(\mathbf x))\le t$ 
for all $j\in\NN$.
\end{enumerate}
\end{corollary}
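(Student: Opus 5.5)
The corollary should follow by specializing Theorem~\ref{thm:main2} to particular Serre subcategories. For each property in question, I would check that the class of $R$-modules having it is a Serre subcategory of $\mathcal C(R)$, and then apply the theorem with that $\mathcal S$.

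For part (1), the finitely generated modules form a Serre subcategory because $R$ is Noetherian: submodules and quotients of finitely generated modules are finitely generated, and extensions of finitely generated modules are finitely generated. Artinian modules are closed under submodules, quotients and extensions by the standard descending chain argument. For minimax modules I would cite \cite{Rudlof}, where closure under submodules, quotients and extensions is established. The brief reason is this: if $0\to M\to N\to L\to 0$ is exact with $M,L$ minimax, choose a finitely generated $L'\subseteq L$ with $L/L'$ Artinian. Lift generators of $L'$ to get a finitely generated $N'\subseteq N$ mapping onto $L'$. Then $N/(M+N')$ is a quotient of $L/L'$, and $(M+N')/N'$ is a quotient of $M$, so $N/N'$ is an extension of minimax modules of this special shape. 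Instead of redoing this bookkeeping, I would simply quote the known result. With these facts in place, Theorem~\ref{thm:main2} gives (1) for each of the three classes.

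For part (2), I would quote \cite{DMA}, where it is shown that weakly Laskerian modules form a Serre subcategory. Closure under submodules and quotients follows directly from the definition, since $\Ass_R(M'/N)\subseteq \Ass_R(M/N)$ for $N\subseteq M'\subseteq M$, and quotients of quotients are again quotients. For extensions, given $0\to M\to N\to L\to 0$ and a submodule $N_0\subseteq N$, one gets an exact sequence
\[
0\to M/(M\cap N_0)\to N/N_0\to L/\operatorname{im}(N_0)\to 0,
\]
and therefore $\Ass_R(N/N_0)$ is contained in the union of two finite sets. Theorem~\ref{thm:main2} then applies.

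For part (3), I would take $\mathcal S$ to be the class of modules $M$ with $\dim_R M\le t$, where dimension means $\dim \operatorname{Supp}_R M$. This class is Serre because $\operatorname{Supp}_R N=\operatorname{Supp}_R M\cup\operatorname{Supp}_R L$ for any short exact sequence $0\to M\to N\to L\to 0$. Applying Theorem~\ref{thm:main2} with this $\mathcal S$ gives (3). The only point needing real care is the extension closure for minimax modules, and a citation handles it.
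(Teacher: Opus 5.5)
Your proposal is correct and matches the paper's approach. The paper gives no separate proof of this corollary; it simply applies Theorem~\ref{thm:main2} to the Serre subcategories of finitely generated, Artinian, minimax and weakly Laskerian modules (citing \cite{DMA, Rudlof}) and to modules of dimension at most $t$. Your verifications of the Serre property fill in what the paper leaves implicit and are sound, in particular the extension arguments for weakly Laskerian modules and the use of $\operatorname{Supp}_R N=\operatorname{Supp}_R M\cup\operatorname{Supp}_R L$ for the dimension class.
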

Note that if $H_I^j(R)\in\mathcal{S}$ for every $j\in\mathbb{N}_0$, then
$\operatorname{Ext}_R^i\left(R/(\mathbf x),H_I^j(R)\right)\in\mathcal{S}$
for all $i,j\in\mathbb{N}_0$. Consequently, the above corollary also holds
in this case.

Indeed, this follows immediately from the long exact sequence of local
cohomology modules 
\[\cdots\longrightarrow H_I^j(R)\xrightarrow{\cdot x}H_I^j(R)
\longrightarrow H_I^j(R/xR)\longrightarrow H_I^{j+1}(R)
\longrightarrow\cdots.\]


\begin{thebibliography}{99}

\bibitem{BH}
     W. Bruns, J. Herzog, {\em Cohen-Macaulay Rings}, 2nd ed., Cambridge University Press (1998).

\bibitem{BS}
M. P. Brodmann and R. Y. Sharp,
\emph{Local Cohomology: An Algebraic Introduction with Geometric Applications},
2nd ed., Cambridge University Press (2012).

\bibitem{DMA}
K. Divaani-Aazar and A. Mafi,
{\em Associated primes of local cohomology modules of weakly Laskerian modules},
 Commun. Algebra  34 (2006)  681--690.

\bibitem{DoseaMiranda}
A. Dosea and C. B. Miranda-Neto,
{\em On Huneke's conjecture about associated primes of local cohomology modules},
 J. Algebra  669 (2025)  143--158.

\bibitem{EghbaliBoix}
M. Eghbali and A. F. Boix,
{\em Vanishing of local cohomology and set-theoretically Cohen--Macaulay ideals},
 Rend. Circ. Mat. Palermo  72 (2023)  3305--3323.

\bibitem{g}
M. Gintz, W. and Zhang, {\em Koszul cohomology and support of local cohomology modules of complete intersections}, J. London Math. Soc., 113(3) (2026). https://doi.org/10.1112/jlms.70466

\bibitem{Huneke}
C. Huneke,
{\em Problems on local cohomology},
in  Free Resolutions in Commutative Algebra and Algebraic Geometry,
Research Notes in Mathematics 2, Jones and Bartlett, (1992) 93--108.

\bibitem{Lyubeznik93}
G. Lyubeznik,
{\em Finiteness properties of local cohomology modules
(an application of D-modules to commutative algebra)},
 Invent. Math.  113 (1993) 41--55.

\bibitem{Lyubeznik06}
G. Lyubeznik,
{\em On the vanishing of local cohomology in characteristic $p>0$},
Compos. Math.  142 (2006)  207--221.


\bibitem{r}
J. J. Rotman, {\em An introduction to homological algebra}, 2nd ed. Academic press (2008).


\bibitem{Rudlof}
P. Rudlof,
{\em On Minimax and Related Modules},
 Canad. J. Math.  44 (1992) 154--166.

\bibitem{Singh}
A. K. Singh,
{\em $p$-Torsion elements in local cohomology modules},
 Math. Res. Lett.  7 (2000) 165--176.

\end{thebibliography}
\end{document}